\newcommand{\bR}{\mathbb{R}}
\newcommand{\ep}{\ensuremath{\varepsilon}}
\newcommand{\vp}{\ensuremath{\varphi}}
\newcommand{\n}[1]{\ensuremath{||#1||}}
\newtheorem{thm}{Theorem}[section]
\newtheorem{lma}{Lemma}[section]
\newtheorem{dfn}{Definition}[section]
\begin{document}

\title{A New Construction of Forests with Low Visibility}
\author{Kirill Kashkan\footnote{partly funded by the Deutsche Forschungsgemeinschaft (DFG, German Research Foundation) under Germany's Excellence Strategy – EXC-2047/1 – 390685813 and by NSERC through the author's supervisor, Almut Burchard.}}
\date{}

\maketitle

\begin{abstract}
A set of points with finite density is constructed in $\bR^d$, with $d\geq2$, by adding points to a Poisson process such that any line segment of length $O\left(\ep^{-(d-1)}\ln\ep^{-1}\right)$ in $\bR^d$ will contain one of the points of the set within distance $\ep$ of it. The constant implied by the big-$O$ notation depends on the dimension only.
\end{abstract}

\section{Introduction}

Consider a forest where all the trees have the same trunk radius and are arranged such that no matter where you stand and where you look, you cannot see further than some distance which depends on the trunk radius. If there is a limit to how far you can see for every trunk radius and the forest has finite density, then it will be called a dense forest. 

A set of points $X\subset\bR^d$ has finite density if
\[
\lim_{T\to\infty} \frac{\#(X\cap B(0,T))}{T^d}<\infty
\]
where $\#(\cdot)$ is the cardinality of a set and $B(x,T)$ is the ball centred at $x$ of radius $T$.
The precise definition of a dense forest is given below.

\begin{dfn}
	Given a set of points $F\subset\bR^d$, $x\in\bR^d$,$v\in S^{d-1}$, and $\ep>0$, the visibility in $F$ at scale $\ep$ from $x$ in direction $v$ is the smallest $t\in\bR_{\geq 0}$ such that
	\[
	\n{x+tv-y}\leq \ep
	\]
	for some $y\in F$.
\end{dfn}
Here, $\n{\cdot}$ is the Euclidean norm. When they are unambiguous, the parameters in the above notation will be suppressed.
\begin{dfn}
	A set of points $F\subset\bR^d$ is a dense forest if it has finite density and there exists a visibility function $V:\bR_{>0}\to\bR_{\geq 0}$ such that for any $x\in\bR^d$, $v\in S^{d-1}$, and $\ep>0$, the visibility is at most $V(\ep)$.
\end{dfn} 
\noindent The points of $F$ can be thought of as the centres of the trees in the forest.

Does such a forest exist? The question can be traced back quite far. The problem can be connected to a problem posed by P\'olya, in \cite{MR0465631}(problem 239, page 151) for instance, often called P\'olya's Orchard Problem. If regularly spaced trees are placed on a circle, how thick must their trunks be to completely block the view from the centre?

The question in its current form was asked---and partially answered---by Bishop in \cite{MR2900167} by giving a construction credited to Peres. In fact, the version of the question in the paper was slightly different than the one given here; it asked if the forest could be made uniformly discrete---that is, there was a minimum distance by which any two tree centres must be separated. In the end, the construction was a union of two uniformly discrete sets and was given in $\bR^2$. The visibility in that initial forest was $O\left(\ep^{-4}\right)$. The problem has since attracted some attention. An improvement was given by Alon in \cite{MR3816056}. The forest was also given in $\bR^2$, with visibility $\nicefrac{2^{C\sqrt{\log\nicefrac{1}{\ep}}}}{\ep}$ for some constant $C$. It was also stated in the conclusion that the construction generalises to higher dimensions. The forest was constructed as an infinite intersection of sets of random point placements using the Lov\'asz local lemma and a compactness argument. It has the added benefit of being uniformly discrete.

A dense forest in $\bR^d$ with a visibility in $O\left(\ep^{-2(d-1)-\eta}\right)$ for any $\eta>0$ was given by Adiceam in \cite{MR3556427}. It was obtained as the union of lattices with their points shifted by a sequence which satisfies certain growth requirements.  
A further improved forest, with an explicit construction, was given by Tsokanos in \cite{MR4459135}. It was constructed by placing regularly spaced `curtains' of points in each coordinate direction which obstruct the visibility for different scales of $\ep$. The visibility bound there is in $O\left(\ep^{-(d-1)}\log(\ep^{-1})\log\log(\ep^{-1})^{1+\eta}\right)$ for any $\eta>0$. Explicit uniformly discrete dense forests, both with and without explicit visibility bounds, were also given by Solomon and Weiss in \cite{MR3581810} and by Adiceam, Solomon, and Weiss in \cite{MR4400945}. A scenic hike through dense forests and related problems is given in \cite{MR4420864}.

The visibility functions examined here will be of the form $\ep^{-(d-1)}E(\ep^{-1})$ where $E$ is an error term.
The main goal is then to minimise the growth rate of this term. The lower bound for the error term, for an `optimal' dense forest, has $E(\ep)\in O(1)$. It is not known if this lower bound is attainable by a dense forest. The relation of this theoretical ideal forest with other problems about discrepancy and their applications are discussed in the conclusion.

Below is the main result of this paper, a modified probabilistic construction of a dense forest with a logarithmic error term.

\begin{thm}\label{main}
	Let $d\geq 2$. There exists a dense forest in $\bR^d$ with visibility 
	\[V(\ep)\in O\left(\ep^{-(d-1)}\ln\ep^{-1}\right).\]
\end{thm}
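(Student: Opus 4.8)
The plan is to take $F=\Pi\cup A$, where $\Pi$ is a homogeneous Poisson point process on $\bR^d$ of a large constant intensity $\lambda$, and $A$ is a further set of points added \emph{deterministically as a function of $\Pi$}, one for every ``net segment'' that $\Pi$ fails to block. Everything will be organised along the dyadic scales $\ep_k=2^{-k}$, $k\in\bN$: I will arrange that every line segment of length $L(\ep_k):=C\,\ep_k^{-(d-1)}\ln\ep_k^{-1}$ is $\ep_k$-blocked by $F$, for every $k$ and for every realisation in a set of full measure, where $C$ is a constant fixed later. An elementary interpolation then yields the theorem: for $\ep$ small put $k=\lceil\log_2\ep^{-1}\rceil$, so that $2^{-k}\le\ep$ and $L(2^{-k})\le c\,\ep^{-(d-1)}\ln\ep^{-1}$ for a constant $c=c(d,C)$; any segment of length $V(\ep):=c\,\ep^{-(d-1)}\ln\ep^{-1}$ then contains a sub-segment of length $L(2^{-k})$, which is $2^{-k}$-blocked and hence $\ep$-blocked (and for $\ep\ge 1$ one simply sets $V(\ep):=V(1)$).

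Fix a scale $\ep=\ep_k$ and write $L=L(\ep)$. I would discretise the family of length-$L$ segments by choosing an $O(\ep)$-net of midpoints (e.g.\ a fine sublattice of $\bZ^d$) together with an $O(\ep/L)$-net of directions in $S^{d-1}$; this produces a collection $\mathcal S_\ep$ of net segments such that every length-$L$ segment in $\bR^d$ lies within Hausdorff distance $\ep/2$ of some member of $\mathcal S_\ep$. Consequently, if a net segment $S$ has a point of $F$ within distance $\ep/2$, then every length-$L$ segment near it is $\ep$-blocked by $F$. For a single $S$, its $\ep/2$-neighbourhood $N_{\ep/2}(S)$ is a tube of volume $\omega_{d-1}(\ep/2)^{d-1}L+O(\ep^d)=\Theta\bigl(C\ln\ep^{-1}\bigr)$ by the very choice of $L$, so the void probability of the Poisson process gives
\[
\bP\bigl(\Pi\cap N_{\ep/2}(S)=\varnothing\bigr)\;=\;e^{-\lambda\,|N_{\ep/2}(S)|}\;\le\;\ep^{\,\beta},
\]
where $\beta=\beta(\lambda,C,d)$ is (essentially) proportional to $\lambda C$ and can be made as large as we wish. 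Call $S$ \emph{bad} when this event occurs, and for each bad $S$ let $A$ contain the midpoint of $S$. Then every net segment, at every dyadic scale, is $\ep/2$-blocked by $\Pi\cup A$, so $F=\Pi\cup A$ has the claimed visibility, provided it has finite density. Note that no union bound over segments is needed here: the repair is performed for each bad net segment individually.

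It remains to bound the density of $A$. The number of members of $\mathcal S_{\ep_k}$ with midpoint in a fixed unit cube is $O\bigl(\ep_k^{-d}(L(\ep_k)/\ep_k)^{d-1}\bigr)$, and a short computation using $(d-1)^2+(2d-1)=d^2$ turns this into $O\bigl(\ep_k^{-d^2}(\ln\ep_k^{-1})^{d-1}\bigr)$; multiplying by the probability $\ep_k^{\beta}$ that a net segment is bad, the expected number of points $A$ places inside that cube at scale $k$ is $O\bigl(2^{k(d^2-\beta)}k^{\,d-1}\bigr)$. Choosing $\lambda$ and $C$ large enough that $\beta>d^2$ — which creates no conflict, since enlarging $C$ merely weakens the visibility bound and enlarging $\lambda$ merely increases a density that stays finite — these quantities are summable over $k\in\bN$, so $\bE\bigl[\#(A\cap[0,1]^d)\bigr]<\infty$. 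The set $A$ is a deterministic $\bZ^d$-equivariant functional of the $\bZ^d$-stationary (indeed ergodic) process $\Pi$, so $\Pi\cup A$ is $\bZ^d$-stationary with finite intensity; by the pointwise ergodic theorem $\#\bigl((\Pi\cup A)\cap[0,n]^d\bigr)/n^d$, and hence $\#\bigl((\Pi\cup A)\cap B(0,T)\bigr)/T^d$, converges almost surely to a finite limit. Thus for almost every realisation of $\Pi$ the set $F=\Pi\cup A$ has finite density, and any such realisation is the desired dense forest.

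The step I expect to be the real obstacle is the discretisation bookkeeping in the last two paragraphs: one must pin down the net parameters and the tube volume precisely enough that the power $\ep_k^{\beta}$ coming from the void probability genuinely dominates the polynomial blow-up $\ep_k^{-d^2}(\ln\ep_k^{-1})^{d-1}$ in the number of net segments, uniformly across all scales, and one must verify that the midpoints placed in $A$ really do lie within the claimed distance of \emph{every} segment near the relevant net segment, not merely the netted ones. By comparison the probabilistic content — void probabilities together with the ergodic theorem, or a direct Borel--Cantelli argument along the cubes $[-2^n,2^n]^d$ — is comparatively soft.
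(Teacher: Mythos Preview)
Your proposal is correct and takes essentially the same route as the paper: a Poisson process repaired at dyadic scales by adding one point for each member of a polynomial-size net that the process leaves empty (the paper's Lemma~2.1 is precisely the discretisation bookkeeping you flag as the real obstacle, phrased via small \emph{test boxes} sitting inside every tube rather than net segments whose $\ep/2$-tubes approximate it). Your appeal to $\bZ^d$-stationarity and the ergodic theorem to pass from finite expected intensity to almost-sure finite density is a welcome bit of extra care that the paper leaves implicit.
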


It is obtained by adding points to a random set obtained from a Poisson point process in places where the visibility is estimated to be too large. The expected density is calculated and shown to be finite.

\noindent\textbf{Acknowledgements}

\noindent The author would like to thank Almut Burchard for many (many) helpful comments and suggestions. Part of the work on this paper was completed during a trimester program at the Hausdorff Research Institute for Mathematics.

\section{Proofs}

The forest will be constructed by first placing down points randomly and then adding points where the visibility exceeds the desired amount. In this case, the desired amount is $V(\ep)=\ep^{-(d-1)}E(\ep^{-1})$, where $E$ will be assumed to be non decreasing as $\ep\to 0$ and $\ep^{-1}\geq E(\ep^{-1})\geq\ep$ for $\ep\leq\frac{1}{2}$ and will be chosen later. The number of additional points placed will be small enough such that the resulting set will have finite density.

The following lemma will be used to help count regions where the visibility is too large. A box will refer to a product of intervals.

\begin{lma}\label{test box lemma}
	Let $E$ be as above. For any unit cube $C\subset\bR^d$ and any $0<\ep\leq\frac{1}{2}$, there exists a finite set of boxes with $d-1$ sides of length $\frac{\ep}{4\sqrt{d}}$ and one side of length $\frac{1}{4\sqrt{d}}\ep^{-(d-1)}E(\ep^{-1})$ of cardinality 
	\[ \left(\frac{4\sqrt{d}}{\ep}\right)^d\left(\frac{\pi}{2\ep^{d+1}}\right)^{d-1}\]
	such that any box with $d-1$ sides of length $2\ep$ and one side of length $\ep^{-(d-1)}E(\ep^{-1})$ with centre in $C$ entirely contains one of the smaller boxes.
\end{lma}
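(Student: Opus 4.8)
We want to show that a "test box" $B$ — with $d-1$ short sides of length $2\ep$ and one long side of length $L := \ep^{-(d-1)}E(\ep^{-1})$, centered anywhere in a unit cube $C$ — always contains one of a finite, explicitly-bounded collection of "smaller boxes" with $d-1$ sides of length $\ep/(4\sqrt d)$ and one long side of length $L/(4\sqrt d)$. The point of this is presumably: if a small box ever fails to contain a forest point, then the visibility-too-large test box containing it also essentially fails, so to ensure good visibility everywhere it suffices to handle only the finitely many small boxes. So the small boxes form an $\ep$-net (in position and direction) for the continuum of test boxes.

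**The approach: discretize position and direction.** A test box is determined by its center $x\in C$ and its axis direction $v\in S^{d-1}$ (the box is $\{x + sv + w : |s|\le L/2, w\perp v, \|w\|\le \text{something}\}$ — though since the short profile is a box not a ball, there's actually a rotation in the orthogonal hyperplane too; I'd first check whether the paper intends the short cross-section to be axis-free or axis-aligned, and I'd guess the cross-section is a $(d-1)$-cube of side $2\ep$ oriented arbitrarily, or possibly they only care about direction $v$ and take the cross-section to be contained in a ball — I'll assume we only need to capture $x$ and $v$, treating the cross-section conservatively via its inscribed ball of radius $\ep$). The plan:

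\begin{enumerate}
\item \textbf{Net for centers.} Cover $C$ by a grid of points spaced $\ep/(8\sqrt d)$ apart in each coordinate; there are at most $\lceil 8\sqrt d/\ep\rceil^d$ of them, which I'd bound by $(4\sqrt d/\ep)^d$ after absorbing constants (this is where the first factor in the cardinality comes from — so actually spacing $\ep/(4\sqrt d)$ with a count $\le(4\sqrt d/\ep)^d$, adjusting the slack budget accordingly).
\item \textbf{Net for directions.} Cover $S^{d-1}$ by a net $\mathcal N$ such that every $v\in S^{d-1}$ is within angular distance $\delta$ of some $u\in\mathcal N$, where $\delta$ is chosen so that a segment of length $L$ starting at a net point and pointing in direction $u$ stays within the small box's cross-section of the true segment of direction $v$. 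The transverse deviation at distance $L$ is about $L\sin\delta \le L\delta$, so we need $L\delta \lesssim \ep$, i.e. $\delta \lesssim \ep/L = \ep^{d}/E(\ep^{-1}) \le \ep^{d+1}$ (using $E\ge \ep$... wait, $E(\ep^{-1})\le \ep^{-1}$, so $\ep/L = \ep^d/E(\ep^{-1}) \ge \ep^{d+1}$; I want an upper bound on the required $\delta$, and $\delta = \ep^{d+1}$ suffices). A standard volumetric bound gives $\#\mathcal N \le (C/\delta)^{d-1}$; plugging $\delta \asymp \ep^{d+1}$ and tracking constants should yield the $(\pi/(2\ep^{d+1}))^{d-1}$ factor.
\item \textbf{Build the small boxes.} For each (grid center $p$, direction $u\in\mathcal N$) pair, let the small box be the box with axis through $p$ in direction $u$, long side $L/(4\sqrt d)$, short sides $\ep/(4\sqrt d)$. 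The total count is the product, matching the claimed cardinality.
\item \textbf{Containment.} Given an arbitrary test box $B$ with center $x\in C$, direction $v$: pick $p$ within $\ep/(8\sqrt d)\cdot\sqrt d = \ep/8$ of $x$ and $u\in\mathcal N$ within angle $\delta$ of $v$. Show the small box attached to $(p,u)$ lies inside $B$. This is a triangle-inequality computation: a point of the small box is $p + su + w$ with $|s|\le L/(8\sqrt d)$ and $\|w\|\le \ep/(8\sqrt d)\cdot\sqrt{d-1} < \ep/8$; write it as $x + s'v + w'$ and bound $|s'|$ and $\|w'\|$ by (shift from $p$ to $x$) + (tilt from $u$ to $v$) + (original offsets). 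Each error term is designed to be $\le \ep/2$ transversally and $\le L/2$ axially, with room to spare from the $1/(4\sqrt d)$ vs $1/(8\sqrt d)$ gap.
\end{enumerate}

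**Main obstacle.** The bookkeeping of constants is the real work: one must choose the grid spacing and angular resolution so that (a) the three sources of transverse error (center shift $\le \ep/8$, tilt $\le L\delta$, intrinsic half-width $\le \ep/8$) sum to at most the test box's half-width $\ep$; (b) the longitudinal errors sum to at most $L/2$; and (c) simultaneously the net cardinalities multiply out to \emph{exactly} the stated $\left(\frac{4\sqrt d}{\ep}\right)^d\left(\frac{\pi}{2\ep^{d+1}}\right)^{d-1}$. The factor $\pi/2$ strongly suggests the direction net is built not by a generic volume argument but by taking a product of $1$-dimensional angular grids of mesh related to $\arcsin$, using $\arcsin t\le \frac{\pi}{2}t$; I would set up $S^{d-1}$ via $d-1$ angular coordinates, put a grid of spacing $\asymp \ep^{d+1}$ on each, use $|{\sin}|\le 1$ and $\arcsin t \le \frac\pi2 t$ to turn the count into $(\frac{\pi}{2\ep^{d+1}})^{d-1}$, and absorb all the $\sqrt d$'s and the $\arcsin$ slack into the generous $\frac{1}{4\sqrt d}$ vs naive $\frac{1}{8\sqrt d}$ margin. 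The geometric content is elementary; getting the advertised closed-form count out is the part that needs care.
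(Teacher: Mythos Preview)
Your proposal is correct and follows essentially the same two-step discretization as the paper: a position grid in $C$ with spacing $\ep/(4\sqrt d)$, and a direction net of angular resolution $\theta=2\ep^{d+1}$ built as a product of $d-1$ one-dimensional angular grids, yielding exactly the stated cardinality. The paper's containment argument is phrased slightly differently---it places a scaled-down copy of the large box at the nearby grid point $G$ and shows via a right-triangle estimate that it can be rotated by up to $\theta$ in each of the $d-1$ planes while remaining inside---but this is equivalent to your planned triangle-inequality bound, and the constant $\pi/2$ arises simply from dividing the angular range $[0,\pi]$ by the step $2\ep^{d+1}$ rather than from an $\arcsin$ inequality.
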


\begin{proof}[Proof of Lemma \ref{test box lemma}]
	The boxes with the smaller size described above will be called test boxes.
	Let $C$ be a unit cube in $\bR^d$. The cube may be assumed to be axis aligned. Consider a $\frac{\ep}{4\sqrt{d}}$ axis aligned grid of points on the cube. It will be used to position the test boxes in $C$. Multiple copies of the test boxes will be placed centred at each grid point to account for all possible rotations of the larger boxes. 
	
	For each grid point, an axis aligned test box with its long axis parallel to the $x_1$ axis is taken. Next, for each grid point, copies of the test box centred at it and rotated by all positive integer multiples of $\theta=2\ep^{d+1}$ such that $k\theta\leq \pi$ along all axis parallel axes, except the long axis, are added.
	There are
	\[\left(\frac{4\sqrt{d}}{\ep}\right)^d\]
	grid points and 
	\[\left(\frac{\pi}{2\ep^{d+1}}\right)^{d-1}\]
	test boxes centred at each grid point.
	
	It remains to show that each large box contains a test box. Any large box with its centre in $C$ contains a grid point, also in $C$, within distance at most $\frac{\ep}{2}$ of its centre and of distance at least $\frac{\ep}{2\sqrt{d}}$ from its boundary. Due to the $\frac{1}{\sqrt{d}}$ factor, it can be assumed that the large box is axis aligned. Call that grid point $G$. A copy of the large box scaled down to the size of a test box is placed centred at $G$. Let $P$ be the point on the border of the large box with minimal distance to $G$. There is a maximal angle, $\vp$, by which the scaled down box may be rotated toward the face on which $P$ lies while remaining inside the large box. Let $I$ be the point of intersection of the long axis of the rotated scaled down box with the large box (see Figure 1). \\
	Consider the right triangle formed by the points $G$, $P$, and $I$. The length of $GP$ is at least $\frac{\ep}{2\sqrt{d}}$ and the length of $GI$ is at most $\frac{1}{4\sqrt{d}}\ep^{-(d-1)}E(\ep^{-1})$. The angle $\vp$ is $GIP$. The angle $\theta$ is at most $\vp$ since
	\begin{align*}
		\vp\geq\sin\vp
		&\geq\frac{\frac{\ep}{2\sqrt{d}}}{\frac{1}{4\sqrt{d}}\ep^{-(d-1)}E(\ep^{-1})}\\
		&=\frac{2\ep^d}{E(\ep^{-1})}\\
		&\geq2\ep^{d+1}\\
		&=\theta.
		\end{align*}
	So, the scaled down box may be rotated by an angle of $\theta$ about the $d-1$ short axes and by any angle about its long axis while remaining inside the large box.
		
\end{proof}

\begin{figure}\label{rot}
	\begin{center}
		\begin{tikzpicture}[scale=0.53]
			
			\coordinate (a) at (16.5,11.5);
			\coordinate (b) at (12,7);
			\coordinate (c) at (17.75,9);
			
			\draw (0,5.2) to (9.8,15);
			\draw (9.8,0) to (20,10.2);
			
			\draw [draw=black,rotate around={45:(12,7)}] (7,6) rectangle (17,8);
			\draw [draw=red!70,rotate around={15:(12,7)}] (7,6) rectangle (17,8);
			
			\draw [dash dot,rotate around={45:(12,7)}] (12,7) -- (17,7) coordinate(original);
			\draw [draw=red!80,dash dot,rotate around={15:(12,7)}] (12,7) -- (18.8,7) coordinate(moved);
			\draw [dotted,rotate around={-45:(12,7)}] (12,7) -- (15.5,7) coordinate(original);
			
			\pic[draw, <-, "$\theta$", angle eccentricity=1.5] {angle=c--b--a};
			\draw (12,7) node[below]{$G$} circle (0);
			\draw [rotate around={15:(12,7)}] (18.8,7) node[below]{$I$} circle (0);
			\draw [dotted,rotate around={-45:(12,7)}] (15.5,7) node[below]{$P$} circle (0);
			
			\foreach \x in {0,2,4,6,8,10,12,14,16,18,20}
			\foreach \y in {1,3,5,7,9,11,13,15}
			{
				\draw [fill] (\x,\y) circle (.035cm);
			}
		\end{tikzpicture}
		\caption{A sketch of a test box inside a large box along with its maximum rotation in one axis(in red).}
	\end{center}
\end{figure}

\begin{proof}[Proof of Theorem \ref{main}]
	To begin, take a point set $F\subset\bR^d$ obtained from a Poisson point process on $\bR^d$ with density $\lambda$, where $\lambda$ depends on the dimension and will be chosen later. The visibility in $F$ will be unbounded, since the Poisson process on $\bR^d$ will produce arbitrarily large gaps between points.
	The rest of the proof is concerned with adding points to $F$ to remove those gaps while maintaining finite density. This is done by discretising over scales of $\ep$ and, for each scale, adding points to reduce the visibility at that scale to at most $V(\ep)$. The sequence $\ep_k=2^{-k},k\geq 2$ will be used for the scales.
	
	For a given $\ep$, the probability that the visibility in $F$ from some point $x\in\bR^d$ in some direction $v\in S^{d-1}$ is greater than $V(\ep)$ is the probability that there are no points in $F$ within distance $\ep$ of the line segment $\{x+tv: 0\leq t\leq V(\ep)\}$. By John's theorem \cite{MR1153987}, up to a dimension dependent constant, this is equal to the probability of there being an empty box with $d-1$ sides of length $2\ep$ and one side of length $V(\ep)$ with its base centred at $x$ and extending in direction $v$. To limit the visibility in $F$, a point must be added to all such empty boxes. This is accomplished by constructing a set of test boxes for each scale of $\ep$. The test boxes are chosen so that each of the larger boxes contains a test box. That way, a point is placed in the centre of each empty test box ensures that none of the original boxes are empty.
	
	Since the density of the Poisson process is fixed, it is enough to restrict the calculations to a unit cube. At scale $k$, the boxes that will be examined are those with $d-1$ sides of length $2\ep_k$ and one of length $V(\ep_k)$ with their centres in a fixed unit cube $C$. By \cref{test box lemma}, there is a set of test boxes with $d-1$ sides of length $\frac{\ep}{4\sqrt{d}}$ and one side of length $\frac{1}{4\sqrt{d}}\ep_k^{-(d-1)}E(\ep_k^{-1})$ of cardinality 
	\[\left(\frac{4\sqrt{d}}{ \ep_k}\right)^d\left(\frac{\pi}{2\ep_k^{d+1}}\right)^{d-1}\]
	such that each of the large boxes with their centre in $C$ entirely contains one of the test boxes. The probability of such a test box being empty is 
	\[e^{-\frac{\lambda }{(4\sqrt{d})^d}E(\ep_k^{-1})}.\]
	 So, the expected number of points added at scale $k$ is 
	\[
	N_k=\left(\frac{4\sqrt{d}}{ \ep_k}\right)^d\left(\frac{\pi}{2\ep_k^{d+1}}\right)^{d-1}e^{-\frac{\lambda }{(4\sqrt{d})^d}E(\ep_k^{-1})}.
	\]
	It should be noted that the points added for $\ep_k$ will also reduce the visibility for all $\ep$ with $\ep_k\leq \ep<\ep_{k-1}$.
	
	Summing over all the scales, the total amount---in expectation---of points that were added is
	\begin{align*}
		\sum_{k=2}^\infty N_k
		&=\sum_{k=2}^\infty \left(\frac{4\sqrt{d}}{ 2^{-k}}\right)^d\left(\frac{\pi}{2(2^{-k})^{d+1}}\right)^{d-1}e^{-\frac{\lambda }{(4\sqrt{d})^d}E(2^{k})}\\
		&\lesssim\sum_{k=2}^\infty 2^{dk+(d+1)(d-1)k}e^{-\frac{\lambda }{(4\sqrt{d})^d}E(2^k)}\\
		&\leq\sum_{k=2}^\infty 2^{2(d+1)^2k}e^{-\frac{\lambda }{(4\sqrt{d})^d}E(2^k)}.
	\end{align*}
	
	Let $E(\ep)=\ln\ep$. The two main terms in the above sum are exponentials of the same order. The above sum is then finite for sufficiently large, depending on the dimension, $\lambda$. Therefore, the expected number of points added to each unit cube is finite and the density remains finite.
	
\end{proof}

Since any face of the unit cube in $\bR^d$ may be tiled by $2^{k(d-1)}$ cubes in $\bR^{d-1}$ of side length $2^{-k}$, there are at least $2^{k(d-1)}$ large boxes with disjoint interiors being examined. So, the number of test boxes needed at scale $\ep_k$ is at least $2^{k(d-1)}$. Therefore, the bounds on visibility cannot be improved by improving the approximation in Lemma \ref{test box lemma}.

\section{Conclusion}

Danzer's problem asks if there exists a set of finite density in $\bR^d$ which intersects every convex set of a fixed volume---a Danzer set. Relaxations of Danzer's problem, obtained by modifying one of the above requirements, may be used to help investigate the existence of a Danzer set. Instead of all convex sets of a fixed volume, again by John's theorem, it is enough to look at all boxes of a fixed volume. By restricting to boxes that have $d-1$ sides of length $\ep$ and one of length $\ep^{-(d-1)}E(\ep^{-1})$, where $E$ grows as $\ep\to 0$, one obtains the problem of finding a dense forest. So, an optimal dense forest would be one where all the boxes at different scales of $\ep$ have the same volume. That is, where the error term is constant. However, the existence of an optimal dense forest would not necessarily imply the existence of a Danzer set as, in dimension greater than $2$, a dense forest is not necessarily a Danzer set.

The other approach is to relax the finite density assumption and allow it to grow as one moves away from the origin. The goal here is then to minimise the growth of the density while still satisfying Danzer's property. Denoting by $g(T)$ the number of points of a set in the ball or radius $T$ centred at the origin, the optimal result would be $g(T)\in O(T^d)$. The current best result is given by Solomon and Weiss in \cite{MR3581810}. It is a set satisfying Danzer's property with $g(T)\in O(T^d\log T)$. They accomplished this by reducing the problem to an equivalent combinatorial problem about the existence of an $\ep$-net on the range space of boxes in a cube. It should be noted that their construction is random. In the conclusion of \cite{MR1397998}, it is conjectured that the lower bound obtained for the equivalent problem is the best possible, implying a negative answer to Danzer's problem.

If the range space mentioned above is restricted to include only axis aligned boxes, one  obtains the problem of minimal dispersion. Precisely, how many points, in terms of $\ep$ and dimension, must be placed in the unit cube in $\bR^d$ such that the largest empty axis aligned box has volume at most $\ep$? An overview of results in the field is given in the introduction of \cite{MR4438171}. The paper, like this one, used a set of test boxes to determine if all full sized boxes had a point in them. In terms of just $\ep$ dependence, sets with a dispersion of order $\frac{1}{\ep}$ have been shown to exist. This is, in part, possible due to the lack of rotation. This is consistent with the results about Danzer sets. Simmons and Solomon constructed in \cite{MR3477090} a set in $\bR^d$ of finite density which intersects every axis aligned box of unit volume in $\bR^d$.

All the above problems measure, in various senses, how well distributed a set of points is. Such sets are useful as they have many applications in numerical analysis. They are used in optimisation, numerical integration, and range query problems among others. 

Two features of the method used in the proof of the main result are noted here. The first is that the actual shape of the boxes does not play much of a role in the main argument. The only important part is that their volume grows sufficiently fast as $\ep\to0$ to guarantee that the sum converges. This approach to finding a dense forest is more evocative of Danzer's problem than some earlier constructions, which work more closely with the definition and rely more on the specific shape of the box.
The second is that most of the work of intersecting the boxes with a point is done by the random process. By optimising the parameter, $\lambda$, of the Poisson process, the number of points that need to be added afterwards can be made relatively small. So, while a random set obtained from a Poisson point process cannot be a dense forest, the dense forest constructed above is not too different from a random set. 

Since in dimension greater than $2$ a dense forest is not necessarily a Danzer set, the conjecture in \cite{MR1397998} does not entirely apply to dense forests. Since the growth rate of the visibility presented here is slower than $O(\ep^{-(d-1)-\eta})$ for any $\eta>0$, there exist forests with a visibility growth rate arbitrarily close to $\ep^{-(d-1)}$. So, an optimal dense forest is a sort of critical point for the existence of such sets. If one does not exist, the goal of minimising the growth of the error term remains. Most forests with explicit visibility bounds constructed so far, including the one here, have examined different scales of $\ep$ separately. The boxes for different scales of $\ep$ overlap and them containing a point inside are not independent events. There should be something to gain by reducing the redundancy.

The main focus in this paper was trying to minimise the growth of visibility in terms of $\ep$. Since the visibility bounds here are given in big-O notation, dimension dependent constants---which are rather large---have been downplayed. Relating this to the problem of minimal dispersion, what bounds can be found for the visibility if it is viewed as a function of both $\ep$ and dimension?

\end{document}